\newtheorem{theorem}{Theorem}[section]
\newtheorem{lemma}[theorem]{Lemma}
\newtheorem{remark}[theorem]{Remark}
\newtheorem{definition}[theorem]{Definition}
\numberwithin{equation}{section}
\newcommand{\NN}{\mathbb{N}}
\newcommand{\ZZ}{\mathbb{Z}}
\newcommand{\w}{\omega}
\newcommand{\TTT}{\mathcal{T}}
\newcommand{\IR}{\mathbb{R}}
\newcommand{\Ss}{\mathbb{S}}
\newcommand{\Ff}{\mathfrak{F}}
\renewcommand{\phi}{\varphi}
\title{A locally quasi-convex abelian group without  Mackey topology}
\author[S.~Gabriyelyan]{Saak Gabriyelyan}
\address{Department of Mathematics, Ben-Gurion University of the
Negev, Beer-Sheva, P.O. 653, Israel}
\email{saak@math.bgu.ac.il}
\subjclass[2000]{Primary 22A10; Secondary 54H11}
\keywords{the Graev free abelian topological group, Mackey group topology}
\begin{document}

\begin{abstract}
We give the first example of a locally quasi-convex (even countable reflexive and $k_\w$) abelian group $G$ which does not admit the strongest compatible locally quasi-convex group topology. Our group $G$ is the Graev free abelian group $A_G(\mathbf{s})$ over a convergent sequence $\mathbf{s}$.
\end{abstract}

\maketitle


\section{Introduction}


Let $(E,\tau)$ be a locally convex space. A locally convex vector topology $\nu$ on $E$ is called {\em compatible with $\tau$} if the spaces $(E,\tau)$ and $(E,\nu)$ have the same topological dual space. The famous Mackey--Arens theorem states the following
\begin{theorem}[Mackey--Arens] \label{t:Mackey-M-A}
Let $(E,\tau)$ be a locally convex space. Then $(E,\tau)$ is a {\em pre-Mackey} locally convex space in the sense that there is the finest locally convex vector space topology $\mu$ on $E$ compatible with $\tau$. Moreover, the topology $\mu$ is the topology of uniform convergence on absolutely convex weakly* compact subsets of the topological dual space $E'$ of $E$.
\end{theorem}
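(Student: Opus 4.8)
The plan is to exhibit the candidate topology explicitly and then verify the two assertions of the theorem separately: that it is compatible with $\tau$, and that it dominates every other compatible locally convex topology. I would work throughout with the canonical dual pairing $\la E, E'\ra$ determined by $(E,\tau)$, and let $\mu$ denote the topology on $E$ of uniform convergence on the members of the family $\mathcal{K}$ of all absolutely convex $\sigma(E',E)$-compact (weak* compact) subsets of $E'$. A base of $\mu$-neighborhoods of $0$ is given by the polars $K^\circ = \{x \in E : |\la x, f\ra| \le 1 \text{ for all } f \in K\}$, $K \in \mathcal{K}$. Each such $K$ is weak* bounded, so each $K^\circ$ is absorbing, absolutely convex and radially closed, and since $\mathcal{K}$ is directed under closed absolutely convex hulls of finite unions, these polars form the neighborhood base of a locally convex vector topology; this verification is routine and I would cite it as a standard fact.

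Next I would establish $(E,\mu)' = E'$. The inclusion $E' \subseteq (E,\mu)'$ is immediate: for $f \in E'$ the set $K = \conv(\{-f,f\})$ lies in $\mathcal{K}$, being a finite absolutely convex hull of a weak* compact set, and $f$ is bounded by $1$ on $K^\circ$, hence $\mu$-continuous. For the reverse inclusion the decisive tool is the bipolar theorem. If $g$ is a $\mu$-continuous linear functional on $E$, then $|g| \le 1$ on some $K^\circ$ with $K \in \mathcal{K}$, that is $g \in (K^\circ)^\circ = K^{\circ\circ}$ computed in the pairing $\la E', E\ra$. Since $K$ is absolutely convex and weak* compact, it is weak* closed, so the bipolar theorem yields $K^{\circ\circ} = K$; thus $g \in K \subseteq E'$. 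This shows $\mu$ is compatible with $\tau$.

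It remains to prove maximality: every locally convex topology $\nu$ on $E$ compatible with $\tau$ satisfies $\nu \subseteq \mu$. I would fix a $\nu$-neighborhood of $0$ and, shrinking it, take an absolutely convex $\nu$-closed neighborhood $U$. Because $\nu$ and $\tau$ share the dual $E'$, the Hahn--Banach separation theorem shows that the absolutely convex $\nu$-closed set $U$ is also $\sigma(E,E')$-closed, whence $U = U^{\circ\circ}$ by the bipolar theorem. Now $U^\circ \subseteq E'$ is the polar of a neighborhood, so the Alaoglu--Bourbaki theorem makes it $\sigma(E',E)$-compact; being moreover absolutely convex, $U^\circ \in \mathcal{K}$. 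Consequently $U = (U^\circ)^\circ$ is a basic $\mu$-neighborhood of $0$, so $U$ is $\mu$-open, and therefore $\nu \subseteq \mu$. Combined with the previous step this identifies $\mu$ as the finest locally convex topology compatible with $\tau$, realized as the topology of uniform convergence on absolutely convex weak* compact subsets of $E'$.

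The main obstacle is the pair of inclusions $(E,\mu)' \subseteq E'$ and $\nu \subseteq \mu$, both of which hinge on the interplay between the bipolar theorem and weak* compactness: restricting the defining family $\mathcal{K}$ to weak* compact (hence weak* closed) sets is exactly what prevents the dual from enlarging, while the Alaoglu--Bourbaki theorem is precisely what guarantees that the polar of an arbitrary compatible neighborhood is an admissible member of $\mathcal{K}$. The elementary topological-vector-space checks — that the polars form a filter base of absorbing absolutely convex sets — I would relegate to standard references.
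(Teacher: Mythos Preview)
The paper does not prove this theorem: it is quoted as the classical Mackey--Arens theorem and serves only as motivation for the group-theoretic question studied afterwards, so there is no proof in the paper to compare against. Your argument is the standard textbook proof and is correct in outline and in essentially all details.

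One small imprecision is worth tightening. In the step $(E,\mu)'\subseteq E'$ you write that a $\mu$-continuous functional $g$ lies in $(K^{\circ})^{\circ}$ ``computed in the pairing $\la E',E\ra$''. But at that moment $g$ is only known to lie in the algebraic dual $E^{*}$, so the bipolar must be taken in the pairing $\la E^{*},E\ra$; the bipolar theorem then gives $(K^{\circ})^{\circ}$ as the $\sigma(E^{*},E)$-closed absolutely convex hull of $K$ in $E^{*}$, and one concludes by observing that a $\sigma(E',E)$-compact set is also $\sigma(E^{*},E)$-compact (the induced topology on $E'$ is the same), hence $\sigma(E^{*},E)$-closed, so this hull is $K$ itself. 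With that adjustment the proof is complete.
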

The topology $\mu$ is called the {\em Mackey topology} on $E$ associated with $\tau$, and if $\mu=\tau$, the space $E$ is called a {\em Mackey space}.

For an abelian topological group $(G,\tau)$ we denote by $\widehat{G}$ the group of all continuous characters of $(G,\tau)$.
Two topologies  $\mu$ and $\nu$ on an abelian group $G$  are said to be {\em compatible } if $\widehat{(G,\mu)}=\widehat{(G,\nu)}$.
Being motivated by the Mackey--Arens Theorem  \ref{t:Mackey-M-A} the following notion was  introduced and studied in \cite{CMPT} (for all relevant definitions see the next section):
\begin{definition}[\cite{CMPT}]{\em
A locally quasi-convex  abelian group $(G,\mu)$ is called a {\em Mackey group} if for every locally quasi-convex group topology $\nu$ on $G$ compatible with $\tau$  it follows that $\nu\leq\mu$. In this case the topology $\mu$ is called a {\em Mackey topology} on $G$. A locally quasi-convex abelian group $(G,\tau)$ is called a  {\em pre-Mackey group} and $\tau$ is called a {\em pre-Mackey topology on $G$} if there is a Mackey topology $\mu$ on $G$  associated with $\tau$. }
\end{definition}
Not every Mackey locally convex space is a Mackey group. Indeed, answering a question posed in \cite{DMPT}, we proved in \cite{Gab-Mackey} that the metrizable locally convex space $(\IR^{(\NN)},\mathfrak{p}_0)$ of all finite sequences with the topology $\mathfrak{p}_0$ induced from the product space $\IR^{\NN}$ is not a Mackey group. In \cite{Gab-Cp} we show that the space $C_p(X)$, which is a Mackey space for every Tychonoff space $X$, is a Mackey group if and only it is barrelled.

A weaker notion than to be a Mackey group was introduced in \cite{Gab-Mackey}. Let $(G,\tau)$ be a locally quasi-convex abelian group. A locally quasi-convex group topology $\mu$ on $G$ is called {\em quasi-Mackey} if $\mu$ is compatible with $\tau$ and there is no locally quasi-convex group topology  $\nu$ on $G$ compatible with $\tau$ such that $\mu<\nu$. The group $(G,\tau)$  is {\em quasi-Mackey} if $\tau$ is a quasi-Mackey topology. Proposition 2.8 of \cite{Gab-Mackey} implies that every locally quasi-convex abelian group has quasi-Mackey topologies. 

The Mackey--Arens theorem suggests the following general  question posed in \cite{CMPT}: {\em Does every locally quasi-convex  abelian group is a pre-Mackey  group}? In the main result of the paper, Theorem \ref{t:A(s)-Mackey}, we answer this question  in the negative.

Let $\mathbf{s}=\{ 0\} \cup\{1/n: n\in\NN\}$ be the convergent sequence endowed with the topology induced from $\IR$. Denote by $A_G(\mathbf{s})$ the (Graev) free abelian group over $\mathbf{s}$. Note that the group $A_G(\mathbf{s})$ is a countable reflexive and $k_\w$-group, see \cite{Gab} and \cite{Gra} respectively.
In Question 4.4 of \cite{Gab-Mackey} we ask: {\em Is it true that $A_G(\mathbf{s})$ is a Mackey group}? Below
 we answer this question negatively in a stronger form.
\begin{theorem} \label{t:A(s)-Mackey}
The  group $A_G(\mathbf{s})$ is neither a pre-Mackey group nor a quasi-Mackey group.
\end{theorem}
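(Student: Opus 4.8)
The plan is to work entirely on the dual side, exploiting the standard correspondence between the locally quasi-convex group topologies on $G:=A_G(\mathbf{s})$ compatible with $\tau$ and the saturated families of weak$^*$-compact quasi-convex subsets of the dual $H:=\widehat{G}$. First I would make $H$ explicit. Writing $x_n$ for the generator coming from $1/n$ (so that $0$ becomes the neutral element), $G$ is the free abelian group $\bigoplus_{n\ge 1}\ZZ\, x_n$, and a character is determined by the sequence $(\chi(x_n))_n\in\TT^{\NN}$; continuity with respect to $\tau$ together with $x_n\to 0$ forces $\chi(x_n)\to 0$ in $\TT$. Thus $H$ is identified with $\{(t_n)\in\TT^{\NN}:t_n\to 0\}$, the weak$^*$ topology $\sigma(H,G)$ is coordinatewise convergence, and the $\tau$-equicontinuous sets are the weak$^*$-compact sets that are ``uniformly small'' in the metric dictated by the $k_\w$-structure of $G$. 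For a weak$^*$-compact quasi-convex $K\subseteq H$ with polar $K^{\triangleleft}\subseteq G$, the topology $\nu_K$ of uniform convergence on the $\tau$-equicontinuous sets together with $K$ satisfies $\nu_K\ge\tau$, is strictly finer than $\tau$ exactly when $K$ is not $\tau$-equicontinuous, and has dual $\widehat{(G,\nu_K)}=H\cup \mathrm{qc}_{G^{\star}}(K)$, where $G^{\star}=\mathrm{Hom}(G,\TT)$ is the full character group and $\mathrm{qc}_{G^{\star}}(K)=K^{\triangleleft\triangleright}$ is the weak$^*$-closed quasi-convex hull of $K$ in the large duality $\langle G,G^{\star}\rangle$. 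In particular $\nu_K$ is compatible if and only if $\mathrm{qc}_{G^{\star}}(K)\subseteq H$.

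The engine of the counterexample is that in the group setting the quasi-convex hull is badly behaved with respect to unions: the hull of $K_1\cup K_2$ can be strictly larger than the union of the hulls and can escape $H$. To exploit this I would target the character $\chi_0$ with $\chi_0(x_n)=1/2$ for all $n$; since $1/2\not\to 0$ we have $\chi_0\notin H$, and writing $\TT_+$ for the image of $[-1/4,1/4]$, its polar $\{\chi_0\}^{\triangleleft}=\{g=\sum a_n x_n:\ \sum_n a_n\ \text{even}\}$ is the even-sum subgroup of index two. Note that $\chi_0$ is the coordinatewise limit of the genuine characters $\gamma_k\in H$ given by $\gamma_k(x_n)=1/2$ for $n\le k$ and $\gamma_k(x_n)=0$ otherwise. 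The point is that $\{\gamma_k\}$ is \emph{not} weak$^*$-compact inside $H$ (its only coordinatewise limit, $\chi_0$, is missing), so one cannot simply take it as an equicontinuous set; instead one must build weak$^*$-compact quasi-convex sets that approach the direction of $\chi_0$ slowly enough to stay compact inside $H$.

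Concretely I would construct a pair (or, if necessary, a finite family) of weak$^*$-compact quasi-convex sets $K_1,K_2\subseteq H$, each a convergent sequence together with its genuine limit in $H$ and each chosen not to be $\tau$-equicontinuous, such that $\mathrm{qc}_{G^{\star}}(K_i)\subseteq H$ for $i=1,2$ while $\chi_0\in\mathrm{qc}_{G^{\star}}(K_1\cup K_2)$; equivalently $K_1^{\triangleleft}\cap K_2^{\triangleleft}\subseteq\{\chi_0\}^{\triangleleft}$. The first condition says each $\nu_i:=\nu_{K_i}$ is compatible and strictly finer than $\tau$; the second says $\chi_0$ is continuous for the join $\nu_1\vee\nu_2$, so $\chi_0\in\widehat{(G,\nu_1\vee\nu_2)}\setminus H$. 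Verifying the \emph{first} condition is the main obstacle: one must show that no spurious character becomes continuous on the single set $K_i$, which amounts to computing $\mathrm{qc}_{G^{\star}}(K_i)$ and checking it remains inside the constraint $t_n\to 0$. This is exactly where the geometry of the convergent sequence $\mathbf{s}$ enters, through the precise rate $1/n\to 0$ and the induced description of quasi-convex hulls, and the quasi-convexity bookkeeping is the delicate part of the argument; the \emph{second} condition, by contrast, is a direct polar computation once $K_1,K_2$ are fixed.

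Finally I would assemble the two conclusions. Since each $\nu_i$ is compatible and $\tau<\nu_i$, the topology $\tau$ is not maximal among compatible locally quasi-convex topologies, so $A_G(\mathbf{s})$ is not quasi-Mackey. For the pre-Mackey claim, suppose a finest compatible (Mackey) topology $\mu$ existed. Then $\nu_1,\nu_2\le\mu$ give $\nu_1\vee\nu_2\le\mu$, whence $\chi_0\in\widehat{(G,\nu_1\vee\nu_2)}\subseteq\widehat{(G,\mu)}=H$, contradicting $\chi_0\notin H$. Hence no Mackey topology exists and $A_G(\mathbf{s})$ is not a pre-Mackey group. (The same contradiction works verbatim if the join of a finite subfamily is used, which is all that continuity of $\chi_0$ at $0$ can ever require, since the neighbourhood filter of the supremum is generated by finite intersections.)
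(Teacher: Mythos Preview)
Your overall strategy is the same as the paper's: exhibit two compatible locally quasi-convex topologies $\nu_1,\nu_2>\tau$ whose join admits a character outside $H=c_0(\Ss)$, then conclude via the characterisation of pre-Mackey groups by joins. Your target witness $\chi_0=(1/2,1/2,\dots)$ is exactly the paper's $\eta=(a,a,\dots)$ with $a$ of order~$2$, and your ``even-sum subgroup'' is the paper's observation that $a^{n_k}=1$ on $W_z\cap W_{az}$.

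The proposal, however, is a plan rather than a proof: the central step---actually producing $K_1,K_2$ and verifying that each $\nu_{K_i}$ is compatible---is explicitly left undone (``the quasi-convexity bookkeeping is the delicate part''). This is not a routine technicality; it is precisely where the paper spends its effort. Rather than working through polars of weak$^*$-compact sets, the paper defines $\nu_1,\nu_2$ concretely as the topologies $\TTT_z,\TTT_{az}$ induced by the embeddings $(n_k)\mapsto\big((n_k),(z^{n_k})\big)$ and $(n_k)\mapsto\big((n_k),((az)^{n_k})\big)$ into $A_G(\mathbf{s})\times\Ff_0(\Ss)$, with $z\in\Ss$ of infinite order and $a$ of finite order. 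Compatibility of $\TTT_z$ is then proved by a direct computation that crucially invokes the number-theoretic result of Bir\'o--Deshouillers--S\'os on characterising sequences for subgroups of $\Ss$ (via Lemma~\ref{l:lemma-F0-Mackey}); nothing in your outline indicates how you would replace this ingredient. Two smaller issues: your displayed formula $\widehat{(G,\nu_K)}=H\cup\mathrm{qc}_{G^{\star}}(K)$ needs justification, since the dual of the topology generated by the $\tau$-equicontinuous sets together with $K$ is $\bigcup_E\mathrm{qc}_{G^{\star}}(E\cup K)$, which in general strictly contains $H\cup\mathrm{qc}_{G^{\star}}(K)$; and the ``precise rate $1/n\to0$'' is irrelevant, as $A_G(\mathbf{s})$ depends only on $\mathbf{s}$ being a nontrivial convergent sequence.
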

This result gives the first example of a locally quasi-convex group which is not pre-Mackey additionally showing a big difference between the case of locally quasi-convex groups and the case of locally convex spaces.


\section{Proof of Theorem \ref{t:A(s)-Mackey}} \label{sec:Mackey-F0}



Set $\NN:=\{ 1,2,\dots\}$.
Denote by $\mathbb{S}$ the unit circle group and set $\Ss_+ :=\{z\in  \Ss:\ {\rm Re}(z)\geq 0\}$.
Let $G$ be an abelian topological group.  If $\chi\in \widehat{G}$, it is considered as a homomorphism from $G$ into $\mathbb{S}$.
A subset $A$ of $G$ is called {\em quasi-convex} if for every $g\in G\setminus A$ there exists   $\chi\in \widehat{G}$ such that $\chi(x)\notin \Ss_+$ and $\chi(A)\subseteq \Ss_+$.
An abelian topological group $G$ is called {\em locally quasi-convex} if it admits a neighborhood base at the neutral element $0$ consisting of quasi-convex sets. It is well known that the class of locally quasi-convex abelian groups is closed under taking products and subgroups. The dual group $\widehat{G}$ of $G$ endowed with the compact-open topology is denoted by $G^{\wedge}$. The homomorphism $\alpha_G : G\to G^{\wedge\wedge} $, $g\mapsto (\chi\mapsto \chi(g))$, is called {\em the canonical homomorphism}. If $\alpha_G$ is a topological isomorphism the group $G$ is called {\em  reflexive}. Any reflexive  group is locally quasi-convex.

Let $X$ be a Tychonoff space with a distinguished point $e$. Following \cite{Gra}, an abelian topological group $A_G(X)$ is called {\em  the Graev free abelian topological  group} over  $X$ if $A_G(X)$ satisfies the following conditions:
\begin{enumerate}
\item[{\rm (i)}] $X$ is  a subspace of $A_G(X)$;
\item[{\rm (ii)}] any continuous map $f$ from $X$ into any abelian topological group $H$, sending $e$ to the identity of $H$, extends uniquely to a continuous homomorphism ${\bar f}: A_G(X) \to H$.
\end{enumerate}
For every Tychonoff space $X$, the Graev free abelian topological  group $A_G(X)$ exists, is unique up to isomorphism of abelian topological groups,  and is independent of the choice of $e$ in $X$, see \cite{Gra}. Further, $A_G(X)$ is algebraically the free abelian group on $X\setminus \{ e\}$.

We denote by $\tau$ the topology of the group $A_G(\mathbf{s})$. For every $n\in\NN$, set
\[
e_n :=(0,\dots,0,1,0,\dots)\in \ZZ^{(\NN)},
\]
where $1$ is placed in position $n$ and $\ZZ^{(\NN)}$ is the direct sum $\bigoplus_\mathbb{N} \mathbb{Z}$. Now the map $i(1/n):=e_n$, $n\in\NN$, defines an algebraic isomorphism of  $A_G(\mathbf{s})$ onto $\ZZ^{(\NN)}$. So we can identify  algebraically $A_G(\mathbf{s})$ and $\ZZ^{(\NN)}$.

Let $g_n$ be a sequence in $A_G(\mathbf{s})$ of the form
\[
g_n =(0,\dots, 0, r^n_{i_n},  r^n_{i_n+1}, r^n_{i_n+2}, \dots ),
\]
where $i_n\to \infty$ and there is a $C>0$ such that $\sum_j |r^n_j| \leq C$ for every $n\in\NN$. Since $e_n\to 0$ in $\tau$ we obtain
\begin{equation} \label{equ:Mackey-Free-0}
g_n \to 0 \quad \mbox{ in } \tau.
\end{equation}

The following group plays an essential role in the proof of Theorem \ref{t:A(s)-Mackey}.
Set
\[
c_0 (\Ss):= \{ (z_n) \in \Ss^\mathbb{N} :\; z_n \to 1\},
\]
and denote by $\Ff_0 (\Ss)$ the group $c_0 (\Ss)$ endowed with the metric  $d\big((z_n^1), (z_n^2 ) \big)= \sup \{ |z_n^1 -z_n^2 |, n\in\NN \}$. Then $\Ff_0 (\Ss)$ is a Polish group, and the sets of the form $V^\NN \cap c_0(\Ss)$, where $V$ is an open neighborhood at the identity $1$ of $\Ss$, form a base at $1$ in $\Ff_0 (\Ss)$. Actually $\Ff_0 (\Ss)$ is isomorphic to $c_0/\ZZ^{(\NN)}$  (see \cite{Gab}). In  \cite{Gab}  we proved that the group $\Ff_0 (\Ss)$ is reflexive and $\Ff_0 (\Ss)^\wedge = A_G(\mathbf{s})$.

If $g$ is an element of an abelian group $G$, we denote by $\langle g\rangle$ the subgroup of $G$ generated by $g$.
We need the following lemma.
\begin{lemma} \label{l:lemma-F0-Mackey}
Let $z,w\in\Ss$ and let $z$ have infinite order. Let $V$ be a neighborhood of $1$ in $\Ss$. If $w^l=1$ for every $l\in\NN$ such that $z^l\in V$, then $w=1$.
\end{lemma}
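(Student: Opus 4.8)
The plan is to prove the contrapositive: assuming $w\neq 1$, I would exhibit some $l\in\NN$ with $z^l\in V$ yet $w^l\neq 1$, contradicting the hypothesis. Writing $L:=\{l\in\NN:\ z^l\in V\}$, the hypothesis reads ``$w^l=1$ for all $l\in L$'', so it suffices to find an $l\in L$ lying outside the subgroup $H:=\{l\in\ZZ:\ w^l=1\}$ of $\ZZ$. Since $H=m\ZZ$ for a unique $m\geq 0$, and $w^1=w\neq 1$ forces $1\notin H$, we have $m\neq 1$; that is, either $m=0$ (when $w$ has infinite order) or $m\geq 2$.

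The single tool driving the argument is the classical fact that an element of $\Ss$ of infinite order has a dense forward cyclic orbit: if $\zeta\in\Ss$ is not a root of unity, then $\{\zeta^k:\ k\in\NN\}$ is dense in $\Ss$. I would apply it as follows. If $m=0$, then $w^l\neq 1$ for every $l\in\NN$, while density of $\{z^l:\ l\in\NN\}$ makes $L$ nonempty, so any $l\in L$ already gives $w^l\neq 1$. If $m\geq 2$, note that $z^m$ again has infinite order (a relation $z^{mk}=1$ would make $z$ a root of unity), hence $\{z^{mk}:\ k\in\NN\}$ is dense, and so is its translate $\{z^{1+mk}:\ k\in\NN\}=z\cdot\{z^{mk}:\ k\in\NN\}$. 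Choosing $k\in\NN$ with $z^{1+mk}\in V$ and setting $l:=1+mk\in\NN$, we obtain $l\in L$, while $l\equiv 1\pmod m$ yields $w^l=w\neq 1$, the desired contradiction.

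The only genuine subtlety — and the step I would treat most carefully — is keeping the chosen exponents positive, so that $l$ lands in $\NN$ rather than merely in $\ZZ$; this is precisely why I invoke density of the \emph{forward} orbit $\{\zeta^k:\ k\in\NN\}$ rather than of the full cyclic group $\langle\zeta\rangle$. Everything else is elementary bookkeeping about the order of $w$, and the proof closes by the resulting contradiction.
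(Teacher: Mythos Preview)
Your argument is correct and, in fact, more elementary than the paper's. The paper proves the lemma by invoking the main result of \cite{BDS}: for the given $z$ of infinite order there is a sequence $(a_n)$ in $\NN$ with $\lim_n v^{a_n}=1$ if and only if $v\in\langle z\rangle$. From $w\neq 1$ it deduces that $w$ has finite order $q$ and $w\notin\langle z\rangle$; since eventually $z^{a_n}\in V$, the hypothesis forces $q\mid a_n$ for large $n$, whence $w^{a_n}=1\to 1$, contradicting the characterization of $\langle z\rangle$. Your route bypasses this external input entirely: using only Kronecker's theorem on the density of the forward orbit $\{\zeta^k:k\in\NN\}$ of an irrational rotation, you manufacture an exponent $l=1+mk\in L$ in a prescribed residue class modulo $m=\mathrm{ord}(w)$, so that $w^l=w\neq 1$. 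What the paper's approach buys is a uniform ``characterizing sequence'' perspective that fits the broader duality theme; what yours buys is self-containment and a shorter proof that needs nothing beyond a pigeonhole-type density statement. Your care about keeping $l\in\NN$ (forward orbits) is exactly the right point to flag.
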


\begin{proof}
The main result of \cite{BDS} applied to $\langle z\rangle$ states the following: there exists a sequence $A=\{ a_n\}_{n\in\NN}$ in $\NN$ such that if $v\in \Ss$, then
\[
\lim_n v^{a_n} =1 \; \mbox{ if and only if } \; v\in \langle z \rangle.
\]
Now suppose for a contradiction that $w\not=1$. Since $\langle z\rangle$ is dense in $\Ss$, there is an $l\in\NN$ such that $z^l\in V$. So $w$ has finite order, say $q$. Observe that $w\not\in \langle z \rangle$. Then, by assumption,  for every $l\in\NN$ such that $z^l\in V$ we have $w^l=1$, and hence there is a $c(l)\in \NN$ such that $l=c(l)\cdot q$. Since $\lim_n z^{a_n} =1$, there exists an $N\in\NN$ such that $z^{a_n} \in V$ for every $n>N$. So $a_n =c(a_n)\cdot q$ for every $n>N$. But in this case we trivially have  $\lim_n w^{a_n} =1$ which contradicts the choice of the sequence $A$ since $w\not\in \langle z \rangle$. Thus $w=1$.
\end{proof}

In the proof of Theorem \ref{t:A(s)-Mackey} we use the following result, see Theorem 2.7 of  \cite{Gab-Mackey}.
\begin{theorem}[\cite{Gab-Mackey}] \label{t:Char-Mackey}
For a locally quasi-convex abelian group $(G,\tau)$ the following assertions are equivalent:
\begin{enumerate}
\item[{\rm (i)}] the group  $(G,\tau)$ is  pre-Mackey;
\item[{\rm (ii)}] $\tau_1\vee\tau_2$ is compatible with $\tau$ for every locally quasi-convex group topologies $\tau_1$ and $\tau_2$ on $G$ compatible with $\tau$.
\end{enumerate}
\end{theorem}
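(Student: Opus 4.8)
The plan is to recast ``pre-Mackey'' as a statement about the supremum of a lattice of topologies, and to reduce the existence of a maximal compatible topology to the compatibility of that supremum. Write $\mathcal{C}$ for the family of all locally quasi-convex group topologies on $G$ compatible with $\tau$; note $\tau\in\mathcal{C}$, and recall that finer topologies have larger dual groups, so $\nu_1\le\nu_2$ gives $\widehat{(G,\nu_1)}\subseteq\widehat{(G,\nu_2)}$. First I would record an auxiliary fact: the supremum of an arbitrary family of locally quasi-convex group topologies is again locally quasi-convex. Indeed, a basic neighborhood of $0$ in the supremum is a finite intersection $U_1\cap\dots\cap U_n$ of quasi-convex neighborhoods taken in the individual topologies; each $U_k$ remains quasi-convex with respect to the (larger) dual group of the supremum, since enlarging the available characters cannot destroy the separation property, and a finite intersection of quasi-convex sets is quasi-convex. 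With this in hand, the guiding observation is that $(G,\tau)$ is pre-Mackey if and only if $\sup\mathcal{C}\in\mathcal{C}$: any Mackey topology is by definition the maximum of $\mathcal{C}$, hence must equal $\sup\mathcal{C}$ and lie in $\mathcal{C}$, while conversely if $\sup\mathcal{C}$ is compatible with $\tau$ it is the maximum of $\mathcal{C}$ and so is the Mackey topology.

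For (i)$\Rightarrow$(ii), assume $\mu:=\max\mathcal{C}$ exists. Given $\tau_1,\tau_2\in\mathcal{C}$ we have $\tau_1\le\tau_1\vee\tau_2\le\mu$, so monotonicity of the dual together with $\widehat{(G,\tau_1)}=\widehat{(G,\mu)}=\widehat{(G,\tau)}$ yields
\[
\widehat{(G,\tau)}=\widehat{(G,\tau_1)}\subseteq\widehat{(G,\tau_1\vee\tau_2)}\subseteq\widehat{(G,\mu)}=\widehat{(G,\tau)},
\]
forcing all inclusions to be equalities; hence $\tau_1\vee\tau_2$ is compatible with $\tau$.

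For (ii)$\Rightarrow$(i), condition (ii) combined with the auxiliary fact shows that $\mathcal{C}$ is closed under finite suprema and is therefore upward directed. The crux is then to prove that the supremum of an upward directed family of locally quasi-convex topologies each compatible with $\tau$ is again compatible with $\tau$; granting this, $\sup\mathcal{C}$ is locally quasi-convex (by the auxiliary fact) and compatible, hence lies in $\mathcal{C}$, and the guiding observation finishes the proof. To establish the directed-supremum statement, let $\chi$ be continuous for $\tau_\infty:=\sup\mathcal{C}$. Using the standard fact that a character is continuous precisely when it carries some neighborhood of $0$ into $\Ss_+$, choose a basic $\tau_\infty$-neighborhood $U_1\cap\dots\cap U_n$ with $\chi(U_1\cap\dots\cap U_n)\subseteq\Ss_+$, where each $U_k$ is a neighborhood for some $\tau_{i_k}\in\mathcal{C}$. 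By directedness there is a single $\tau_j\in\mathcal{C}$ above all the $\tau_{i_k}$, so each $U_k$, and hence their intersection, is a $\tau_j$-neighborhood; thus $\chi\in\widehat{(G,\tau_j)}=\widehat{(G,\tau)}$. Since $\tau\in\mathcal{C}$ gives $\tau\le\tau_\infty$ and hence $\widehat{(G,\tau)}\subseteq\widehat{(G,\tau_\infty)}$, we conclude $\widehat{(G,\tau_\infty)}=\widehat{(G,\tau)}$.

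I expect the directed-supremum step to be the main obstacle: everything else is bookkeeping with the lattice of topologies and with monotonicity of the dual, whereas here one must pass from continuity with respect to the (possibly infinite) join to continuity with respect to a single member. This is exactly where upward directedness --- available only under hypothesis (ii) --- is indispensable, and where the elementary criterion ``$\chi(U)\subseteq\Ss_+$ for some neighborhood $U$ implies $\chi$ continuous'' is invoked.
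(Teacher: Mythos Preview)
The paper does not supply a proof of this theorem; it is quoted verbatim as Theorem~2.7 of \cite{Gab-Mackey} and used as a black box in the proof of Theorem~\ref{t:A(s)-Mackey}. So there is nothing in the present paper to compare your argument against.

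That said, your proposal is a correct self-contained proof. The logic is sound throughout: the auxiliary fact on local quasi-convexity of suprema is standard and your justification is right; the implication (i)$\Rightarrow$(ii) is immediate from monotonicity of duals once a maximum exists; and in (ii)$\Rightarrow$(i) the directedness reduction is exactly the right idea, with the continuity criterion ``$\chi(U)\subseteq\Ss_+$ for some neighborhood $U$'' doing the work of pushing a $\sup\mathcal{C}$-continuous character down to a single $\tau_j\in\mathcal{C}$. One small point worth making explicit for the reader: when you say $\mathcal{C}$ is closed under finite suprema, you are combining hypothesis (ii), which gives compatibility of $\tau_1\vee\tau_2$, with the auxiliary fact, which gives its local quasi-convexity---both are needed for $\tau_1\vee\tau_2\in\mathcal{C}$. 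You state this, but it could be highlighted since it is the only place the auxiliary fact is truly essential.
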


Now we are ready to prove Theorem \ref{t:A(s)-Mackey}.
\begin{proof}[Proof of Theorem \ref{t:A(s)-Mackey}]
First we construct a family 
\[
\{ \TTT_z : z\in\Ss \mbox{ has infinite order}\}
\]
of topologies on $\ZZ^{(\NN)}$ compatible with the topology $\tau$ of $A_G(\mathbf{s})$. To this end, we use the idea described in Proposition 4.1 of \cite{Gab-Mackey}.

Let $z\in\Ss$ be of infinite order. For every $i\in\NN$, set
\[
\chi_i :=(0,\dots,0,z,0,\dots) \in \Ff_0(\Ss)=A_G(\mathbf{s})^\wedge,
\]
where $z$ is placed in position $i$. For every $(n_k)\in A_G(\mathbf{s})$, it is clear that $\chi_i\big( (n_k)\big)=1$ for all sufficiently large $i\in\NN$ (i.e., $\chi_i\to 1$ in the pointwise topology on $\Ff_0(\Ss)$). So  we can define the following algebraic monomorphism $T_z:\ZZ^{(\NN)}\to A_G(\mathbf{s})\times \Ff_0(\Ss)$ by
\begin{equation} \label{equ:Mackey-Free-1}
T_z\big( (n_k)\big):= \bigg( (n_k), \big( \chi_i\big((n_k)\big)\big)\bigg)=\bigg( (n_k), \big( z^{n_k}\big) \bigg), \quad \forall \; (n_k)\in \ZZ^{(\NN)}.
\end{equation}
Denote by $\TTT_z$ the locally quasi-convex topology on $\ZZ^{(\NN)}$ induced from $A_G(\mathbf{s})\times \Ff_0(\Ss)$.

{\em Claim 1. The topology $\TTT_z$ is compatible with $\tau$.}  Indeed, set $G:= \big(\ZZ^{(\NN)}, \TTT_z\big)$. Since $\TTT_z$ is weaker than the discrete topology $\tau_d$ on $\ZZ^{(\NN)}$, we obtain $G^\wedge \subseteq \big( \ZZ^{(\NN)}, \tau_d\big)^\wedge= \Ss^\NN$. Fix arbitrarily $\chi=(y_n)\in G^\wedge$. To prove the claim we have to show that $y_n \to 1$.

Suppose for a contradiction that $y_n\not\to 1$. As $\Ss$ is compact we can find a sequence $0< m_1< m_2<\dots$ of indices such that $y_{m_i} \to w \not= 1$ at $i\to\infty$. Since $\chi$ is $\TTT_z$-continuous,  there exists a standard neighborhood $W=T_z^{-1}\big(U\times V^\NN\big)$ of zero in $G$, where $U$ is a $\tau$-neighborhood of zero in $A_G(\mathbf{s})$ and $V$ is a neighborhood of $1$ in $\Ss$, such that $\chi(W)\subseteq \Ss_+$. Observe that, by (\ref{equ:Mackey-Free-1}),  $(n_k)\in W$ if and only if
\begin{equation} \label{equ:Mackey-Free-11}
(n_k)\in U \mbox{ and } z^{n_k} \in V \mbox{ for every } k\in \NN,
\end{equation}
and, the inclusion $\chi(W)\subseteq \Ss_+$ means that
\begin{equation} \label{equ:Mackey-Free-2}
\chi\big( (n_k)\big) =\prod_k y_k^{n_k} \in \Ss_+, \; \mbox{ for every } (n_k)\in W.
\end{equation}
We assume additionally that $w\not\in V$.  Since $\langle z\rangle$ is dense in $\Ss$, choose arbitrarily an $l\in\NN$ such that $z^l\in V$. Fix arbitrarily a $t\in\NN$.  Now, by (\ref{equ:Mackey-Free-0}), there is an $N(t)\in \NN$ such that every $\mathrm{x}_{it}:=(n_k)\in \ZZ^{(\NN)}$ of the form
\begin{equation} \label{equ:Mackey-Free-3}
\mathrm{x}_{it}= (0,\dots,0, \underbrace{l}_{m_{i+1}},0,\dots,0,\underbrace{l}_{m_{i+2}},0,\dots,0,\underbrace{l}_{m_{i+t}},0,\dots)
\end{equation}
belongs to $W$ for every $i\geq N(t)$. For every  $\mathrm{x}_{it}\in W$ of the form (\ref{equ:Mackey-Free-3}), (\ref{equ:Mackey-Free-2}) implies
\begin{equation} \label{equ:Mackey-Free-4}
\chi\big( \mathrm{x}_{it} \big) = \big( y_{m_{i+1}} \cdots y_{m_{i+t}}\big)^l \to w^{lt}, \quad \mbox{ at } i\to\infty.
\end{equation}
Now, if $w^l \not=1$ for some $l\in\NN$ such that $z^l\in V$, then there exists a $t\in\NN$ such that $w^{lt}\not\in \Ss_+$. Therefore, by (\ref{equ:Mackey-Free-4}),  $\chi(W)\nsubseteq \Ss_+$, a contradiction. Assume that  $w^l =1$ for every $l\in\NN$ such that $z^l\in V$. Then Lemma \ref{l:lemma-F0-Mackey} implies $w=1$ that is impossible. So our assumption that $y_n\not\to 1$ is wrong. Therefore $y_n\to 1$ and $\widehat{G}=c_0(\Ss)$. Thus $\TTT_z$ is compatible with $\tau$.

{\em Claim 2. For every element $a\in\Ss$ of finite order, the topology $\TTT_z \vee \TTT_{az}$ is not compatible with $\tau$.} Let $r$ be the order of $a$. Consider standard neighborhoods
\[
W_z=T_z^{-1}\big(U\times V^\NN\big) \in \TTT_z \;\; \mbox{ and } \;\; W_{az}=T_{az}^{-1}\big(U\times V^\NN\big) \in\TTT_{az},
\]
where $U\in\tau$ and a symmetric neighborhood $V$ of $1$ in $\Ss$ is chosen such that $V\cdot V\cap \langle a\rangle =\{ 1\}$. Then, by (\ref{equ:Mackey-Free-11}), we have
\[
W_z \cap W_{az} = \left\{ (n_k)\in \ZZ^{(\NN)} : (n_k)\in U \mbox{ and } z^{n_k}, (az)^{n_k} \in V \mbox{ for every } k\in\NN\right\}.
\]
In particular, $a^{n_k}\in V\cdot V$, and hence $a^{n_k}=1$ for every $k\in\NN$. Therefore, for every $k\in\NN$, there is an $s_k\in\NN$ such that $n_k = s_k \cdot r$. Set $\eta:=(a,a,\dots)\in \Ss^\NN$. Then $\eta(W_z \cap W_{az} )=\{ 1\}$. As $W_z \cap W_{az}\in \TTT_z \vee \TTT_{az}$ it follows that $\eta$ is $\TTT_z \vee \TTT_{az}$-continuous. Since $\eta\not\in c_0(\Ss)$ we obtain that $\TTT_z \vee \TTT_{az}$ is not compatible with $\tau$.

{\em Claim 3. $\tau <\TTT_z$, so $\tau$ is not quasi-Mackey.} By (\ref{equ:Mackey-Free-1}), it is clear that $\tau \leq\TTT_z$. To show that $\tau\not= \TTT_z$, suppose for a contradiction that $\TTT_z =\tau$. Then, by Claim 1, $\TTT_z \vee \TTT_{az}=\tau \vee \TTT_{az}= \TTT_{az}$ is compatible with $\tau$. But this contradicts Claim 2.

{\em Claim 4. The group $A_G(\mathbf{s})$ is not pre-Mackey}. This immediately follows from Claim 2  and Theorem \ref{t:Char-Mackey}.
\end{proof}

\begin{remark} {\em
Just before submission of the preprint, Prof. Lydia Au{\ss}enhofer informed the author that she had also solved the problem posed by me: namely if the group $A_G(\mathbf{s})$ is a Mackey group and proved Theorem \ref{t:A(s)-Mackey}, see \cite{Aus3}. It  is worth mentioning that my proof totally differs from hers, being much simpler and shorter. }
\end{remark}


\bibliographystyle{amsplain}

\end{document}